\providecommand{\U}[1]{\protect\rule{.1in}{.1in}}
\newtheorem{theorem}{Theorem}
\newtheorem{lemma}[theorem]{Lemma}
\newenvironment{proof}[1][Proof]{\noindent\textbf{#1.} }{\ \rule{0.5em}{0.5em}}
\begin{document}

\title{On the asymptotic Plateau's problem for CMC hypersurfaces in hyperbolic space}
\author{Jaime Ripoll}
\date{}
\maketitle

\begin{abstract}
Let $\mathbb{R}_{+}^{n+1}$ \ be the half-space model of the hyperbolic space
$\mathbb{H}^{n+1}.$ It is proved that if $\Gamma\subset\left\{  x_{n+1}%
=0\right\}  \subset\partial_{\infty}\mathbb{H}^{n+1}$ is a bounded $C^{0}$
Euclidean graph over $\left\{  x_{1}=0,\text{ }x_{n+1}=0\right\}  $ then,
given $\left\vert H\right\vert <1,$ there is a complete, properly embedded,
CMC $H$ hypersurface $S$ of $\mathbb{H}^{n+1}$ such that $\partial_{\infty
}S=\Gamma\cup\left\{  x_{n+1}=+\infty\right\}  .$ Moreover, $S$ is a
horizontal graph over the totally geodesic hypersurface $\mathbb{H}%
^{n}=\left\{  x_{1}=0\right\}  .$

Horizontal graphs are limit of radial graphs. Thus, this result can be seen as
a limit case of the existence theorem proved by B. Guan and J. Spruck in
\cite{GS} on CMC $\left\vert H\right\vert <1$ radial graphs with prescribed
$C^{0}$ asymptotic boundary data.

\end{abstract}

\section{Introduction}

\qquad Let $\mathbb{R}_{+}^{n+1}$ be the half-space model of the hyperbolic
space $\mathbb{H}^{n+1}.$ B. Guan and J. Spruck in \cite{GS}, Theorem 1.8,
proved that if $\Gamma$ is a compact starshaped $C^{0}$ hypersurface of
$\mathbb{R}^{n}=\left\{  x_{n+1}=0\right\}  \subset\partial_{\infty}%
\mathbb{H}^{n+1}$ then, given $\left\vert H\right\vert <1,$ there exists a
complete, properly embedded, $C^{\infty},$ constant mean curvature (CMC) $H$
hypersurface $S$ in $\mathbb{H}^{n+1}$ such that $\overline{S}=S\cup\Gamma$ is
a $C^{0}$ hypersurface with boundary in $\overline{\mathbb{H}}^{n+1}%
=\mathbb{H}^{n+1}\cup\partial_{\infty}\mathbb{H}^{n+1}.$ The hypersurface $S$
is constructed in \cite{GS} as a radial graph of an entire solution of a
PDE\ defined on an upper hemisphere $\mathbb{S}_{+}^{n}$ of the unit sphere
$\mathbb{S}^{n}$ centered at the origin of $\mathbb{R}_{+}^{n+1}.$

\medskip

Our main result is an extension of Theorem 1.8 of \cite{GS} to horizontal
graphs, a limit case of radial graphs. To present it in a precise and clear
form it is convenient to state Guan-Spruck result in intrinsic terms.

\medskip

Using the conformal structure of $\partial_{\infty}\mathbb{H}^{n+1}$ (see
first paragraph of Section \ref{hy})$,$ it is easy to see that the starshaped
and compactness properties of $\Gamma$ are equivalent to the existence of two
distinct points $p_{1},p_{2}\in\partial_{\infty}\mathbb{H}^{n+1},$ not
belonging to $\Gamma,$ such that any arc of circle having $p_{1}$ and $p_{2}$
as ending points intersects $\Gamma$ at one and only one point. Moreover, the
radial property of $S$ is equivalent to $S$ being the \emph{hyperbolic}
Killing graph of a function defined in a totally geodesic hypersurface of
$\mathbb{H}^{n+1}$. Precisely: There exist a totally geodesic hypersurface
$\mathbb{H}^{n}$ of $\mathbb{H}^{n+1}$, a Killing field $X$ which orbits in
$\mathbb{H}^{n+1}$ are hypercycles orthogonal to $\mathbb{H}^{n}$ and in
$\partial_{\infty}\mathbb{H}^{n+1}$ are the arcs of circles from $p_{1}$ to
$p_{2},$ and a function $u\in C^{\infty}\left(  \mathbb{H}^{n}\right)  \cap
C^{0}\left(  \overline{\mathbb{H}}^{n}\right)  $ such that%
\[
\overline{S}=\left\{  \Psi_{X}(u(x),x)\text{
$\vert$
}x\in\overline{\mathbb{H}}^{n}\right\}  ,
\]
where $\Psi_{X}\left(  t,x\right)  ,$ $t\in\mathbb{R}$ is the flow of $X$
through $x$, $\Psi_{X}\left(  0,x\right)  =x,$ $x\in\overline{\mathbb{H}%
}^{n+1}$. The fact that $S$ has CMC $H$ is equivalent to that of $u$ being a
solution of the PDE (\ref{pde}).

\medskip

In the present paper we reprove the above theorem of Guan-Spruck (Theorem
\ref{hyp} below) and extend it to include the limit case where we have
$p_{1}=p_{2}.$ In this case, $p:=p_{1}=p_{2},$ we assume that $\Gamma$ is
contained in a pinched annulus bounded by two hyperspheres, that this, there
are hyperspheres $E_{1}$ and $E_{2}$ of $\partial_{\infty}\mathbb{H}^{n+1}$
which are tangent to $p\in E_{1}\cap E_{2}$ and $\Gamma\subset U_{1}\cap
U_{2}$ where $U_{i}\subset\partial_{\infty}\mathbb{H}^{n+1}$ is the closure of
the connected component of $\partial_{\infty}\mathbb{H}^{n+1}\backslash E_{i}$
that contains $E_{j},$ $i\neq j$ (note that this condition is the limit of the
case in which $\Gamma$ is contained in an annulus of $\partial_{\infty
}\mathbb{H}^{n+1}$ bounded by two hyperspheres$,$ condition which is trivially
satisfied in Theorem 1.8 of \cite{GS} from the assumption that $p_{1}%
,p_{2}\notin\Gamma$ and that $\Gamma$ is compact)$.$ Moreover, we require that
any circle passing through $p$ orthogonal to $E_{1},$ intersects $\Gamma$ at
one and only one point (note also that this condition is the limit of the
starshaped condition)$.$

Under the above assumptions it is proved (Theorem \ref{par}) the existence of
a properly embedded CMC $\left\vert H\right\vert <1$ hypersurface $S$ of
$\mathbb{H}^{n+1}$ having $\Gamma$ as asymptotic boundary; moreover, $S$ is a
parabolic graph, that is, there exist a totally geodesic hypersurface
$\mathbb{H}^{n}$ of $\mathbb{H}^{n+1}$, a Killing field $Y$ vanishing at $p$
which orbits in $\mathbb{H}^{n+1}$ are horocycles orthogonal to $\mathbb{H}%
^{n}\ $and in $\partial_{\infty}\mathbb{H}^{n+1}$ are the circles through $p$,
orthogonal to $E_{1}$ at $p,$ and a function $u\in C^{\infty}\left(
\mathbb{H}^{n}\right)  \cap C^{0}\left(  \overline{\mathbb{H}}^{n}%
\backslash\left\{  p\right\}  \right)  $ such that
\[
\overline{S}=\left\{  \Psi_{Y}(u(x),x)\text{
$\vert$
}x\in\overline{\mathbb{H}}^{n}\right\}  ,
\]
where $\Psi_{Y}$ is the flow of $Y$.

We note that in the half-space model $\mathbb{R}_{+}^{n+1}$ of $\mathbb{H}%
^{n+1},$ this last result can be stated as follows. Take $Y$ as $Y(x_{1}%
,...,x_{n+1})=(1,0,...,0)$, $\Psi_{Y}\left(  t,\left(  x_{1},...,x_{n+1}%
\right)  \right)  =\left(  x_{1}+t,...,x_{n+1}\right)  .$ Then if
$\Gamma\subset\left\{  x_{n+1}=0\right\}  $ is a bounded $C^{0}$ graph over
$\left\{  x_{1}=0,\text{ }x_{n+1}=0\right\}  $ then there is a hypersurface
$S$ with prescribed mean curvature $H$ which is a horizontal graph over the
totally geodesic hypersurface $\mathbb{H}^{n}=\left\{  x_{1}=0\right\}  $ such
that $\partial_{\infty}S=\Gamma\cup\left\{  x_{n+1}=+\infty\right\}  .$ Note
that in this model $p=\left\{  x_{n+1}=+\infty\right\}  .$

\medskip

The above results can be summarized in the following short statement: Let $Z$
be a Killing fieId of the hyperbolic or parabolic type in $\mathbb{H}^{n+1}$
orthogonal to a totally geodesic hypersurface $\mathbb{H}^{n}$ of
$\mathbb{H}^{n+1}.$ If $\Gamma\subset\partial_{\infty}\mathbb{H}^{n+1}$ is a
compact, embedded, topological hypersurface which is a $Z-$Killing graph over
$\partial_{\infty}\mathbb{H}^{n}$ then, given $\left\vert H\right\vert <1,$
there is a complete, properly embedded, CMC $H$ hypersurface $S$ of
$\mathbb{H}^{n+1}$ which is a $Z-$Killing graph over $\mathbb{H}^{n}$ such
that $\partial_{\infty}S=\Gamma.$

\medskip

Our approach for proving Theorems \ref{hyp} and \ref{par} is based on Perron's
method for the CMC equation of Killing graphs (Theorem \ref{perron}), which
validity is proved by using Theorems 1 and 2 of \cite{DLR} (Theorems 1 and 2
stated below). With this method we obtain a simple and unified proof of both
Theorems \ref{hyp} and \ref{par}.

\section{Perron's method for the mean curvature equation of Killing graphs}

\qquad Let $N^{n+1}$ denote a $(n+1)$-dimensional Riemannian manifold. Assume
that $N$ admits a Killing vector field $Z$ with no singularities, whose
orthogonal distribution is integrable and whose integral lines are complete.
Fix an integral hypersurface $M^{n}$ of the orthogonal distribution. It is
easy to see that $M$ is a totally geodesic submanifold of $N$. Denote by
$\Psi\colon\mathbb{R}\times N\rightarrow N$ the flow of $Z$. Notice that
$\gamma=1/\left\langle Z,Z\right\rangle \>$can be seen as a function in $M$
since $Z\gamma=0$ by the Killing equation. Moreover, the solid cylinder
$\Psi(\mathbb{R}\times M)$ with the induced metric has a warped product
Riemannian structure $M\times_{\rho}\mathbb{R}$ where $\rho=1/\sqrt{\gamma}$.

Given a function $u$ on an open subset $\Omega$ of $M$ the associated
\emph{Killing graph} is the hypersurface
\[
\mbox{Gr}(u)=\{\Psi(u(x),x)\text{
$\vert$
}x\in\Omega\}.
\]
It is shown in \cite{DHL} that $\mbox{Gr}(u)$ has CMC $H$ if and only if $u\in
C^{2}(\Omega)$ satisfies
\begin{equation}
Q_{H}\left(  u\right)  =\text{div}\Bigg(\frac{\nabla u}{w}\Bigg)-\frac{\gamma
}{w}\left\langle \nabla u,\bar{\nabla}_{Z}Z\right\rangle =nH \label{pde}%
\end{equation}
where $w=\sqrt{\gamma+|\nabla u|^{2}}$ and $H$ is computed with respect to the
orientation of $\mbox{Gr}(u)$ given by the unit normal vector $\eta$ such that
$\left\langle \eta,Z\right\rangle \leq0$. Also $\bar{\nabla}$ denotes the
Riemannian connection of $N.$

\vspace{1ex}

Given $o\in\Omega$ let $r>0$ be such that $r<i(o)$ where $i(o)$ is the
injectivity radius of $M$ at $o$. We denote by $B_{r}(o)$ the geodesic ball
centered at $o$ and radius $r$ which closure is contained in $\Omega$. It is
proved in \cite{DLR}:

\begin{theorem}
{\hspace*{-1ex}}\textbf{. }\label{korevaar} Let $u\in C^{3}(B_{r}(o))$ be a
solution of the mean curvature equation (\ref{pde}). Then, there exists a
constant $L=L(u(o),r,\gamma,H)$ such that $|\nabla u(o)|\leq L$.
\end{theorem}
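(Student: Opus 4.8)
The plan is to recast the desired bound on $|\nabla u(o)|$ as a pointwise positive lower bound for the support function $\Theta := \langle \eta, Z\rangle$ of the graph, and then to obtain that lower bound by applying the maximum principle to a Korevaar-type test function built on the hypersurface $\Sigma := \mathrm{Gr}(u)$ itself. A direct computation with the warped product metric $M\times_\rho\mathbb{R}$, $\rho = 1/\sqrt{\gamma}$, gives that the unit normal with $\langle\eta,Z\rangle\le 0$ satisfies $\langle\eta,Z\rangle = -1/w$, where $w=\sqrt{\gamma+|\nabla u|^{2}}$. Hence $|\nabla u|^{2} = w^{2}-\gamma = \Theta^{-2}-\gamma$, and bounding $|\nabla u(o)|$ is exactly equivalent to bounding $\Theta$ at the point $p_{0}:=\Psi(u(o),o)$ away from $0$. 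So the whole problem reduces to proving an inequality of the form $-\Theta(p_{0})\ge c>0$ with $c$ depending only on the listed data.

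The geometric input I would exploit is that $Z$ is a Killing field and $\Sigma$ has constant mean curvature $H$. Because the flow $\Psi$ of $Z$ consists of isometries, it carries $\Sigma$ through a family of CMC-$H$ hypersurfaces, so the normal component $\Theta=\langle Z,\eta\rangle$ of the deformation field is a Jacobi field:
\[
\Delta_{\Sigma}\Theta + \big(|A|^{2} + \overline{\mathrm{Ric}}(\eta,\eta)\big)\Theta = 0 .
\]
I would combine this with the elementary Laplacian of the flow-parameter (height) function $h$, which on $\Sigma$ equals $u$ and satisfies $\bar\nabla h = \gamma Z$, so that $\Delta_{\Sigma} h$ is an explicit expression in $\Theta$, $H$ and $\bar\nabla_{Z} Z$. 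Writing $v:=-\Theta=1/w>0$, the Jacobi equation yields
\[
\Delta_{\Sigma}\log v = -\,|A|^{2} - \overline{\mathrm{Ric}}(\eta,\eta) - |\nabla_{\Sigma}\log v|^{2} .
\]
In $\mathbb{H}^{n+1}$ one has $\overline{\mathrm{Ric}}(\eta,\eta)=-n$, so the $-|A|^{2}$ and the gradient term carry the favourable sign while the curvature contributes only a bounded constant; this is precisely the structure Korevaar's method is designed to handle.

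Next I would run the maximum principle. Choose a cutoff $\zeta$ that is a function of the ambient flow-parameter and of the distance in $M$ to $o$, supported on the solid cylinder over $B_{r}(o)$ and vanishing on its lateral boundary, and consider the auxiliary function $\varphi := \log v + \lambda h - \log\zeta$ (equivalently the product $\zeta\, v^{-1} e^{-\lambda h}$) on $\Sigma$. At an interior maximum of $\varphi$ the first-order condition $\nabla_{\Sigma}\varphi=0$ lets me eliminate the gradient terms, and feeding in the two displayed identities turns $\Delta_{\Sigma}\varphi\le 0$ into an algebraic inequality. Choosing $\lambda$ large enough to dominate the bounded curvature term together with the contribution of $\bar\nabla_{Z} Z$ forces an upper bound on $w$ at the maximum point, hence on $w(o)$, i.e.\ the claimed estimate. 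The dependence on $u(o)$ and $r$ enters through the factor $e^{-\lambda h}$ and the cutoff: the oscillation of $h=u$ over the cylinder is controlled in terms of $u(o)$ and $r$ by comparison with explicit rotationally symmetric CMC-$H$ barriers (spheres or equidistant hypersurfaces), which also keeps $\Sigma$ inside the cylinder so that the maximum is genuinely interior.

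The step I expect to be the main obstacle is the bookkeeping of the curvature terms together with the cutoff: one must design $\zeta$ so that its first and second derivatives along $\Sigma$ are controlled independently of the a priori unbounded quantity $w$ --- this is the reason Korevaar takes a cutoff depending on ambient quantities rather than on intrinsic distance in $\Sigma$ --- and then verify that both $\overline{\mathrm{Ric}}(\eta,\eta)$ and the warping term $\bar\nabla_{Z} Z$ can be absorbed by the exponential weight without creating a sign obstruction. Establishing the a priori height (oscillation) bound through the comparison barriers, and checking that they are admissible for the range $|H|<1$, is the other place where the specific geometry of $\mathbb{H}^{n+1}$, rather than a soft argument, must be used.
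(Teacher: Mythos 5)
The paper itself contains no proof of this statement: it is imported verbatim from \cite{DLR} (``An interior gradient estimate for the mean curvature equation of Killing graphs''), and the proof given there is exactly the Korevaar-type argument you outline --- the identity $\langle\eta,Z\rangle=-1/w$, the Jacobi equation satisfied by the support function of the Killing field on a CMC hypersurface, the explicit Laplacian of the flow parameter, and a maximum principle applied to a cutoff-weighted combination of these. So your overall strategy is the right one and matches the source, and the computational identities you state are correct.

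There is, however, one concrete step that does not work as described: the claim that ``the oscillation of $h=u$ over the cylinder is controlled in terms of $u(o)$ and $r$ by comparison with explicit rotationally symmetric CMC-$H$ barriers.'' For a purely interior estimate you know nothing about $u$ on $\partial B_{r}(o)$, so a barrier placed through the single known point $\Psi(u(o),o)$ traps nothing. In fact no such oscillation bound can exist: in the radial-graph model (origin-centered hemisphere $M=\mathbb{H}^{n}$, $X(x)=x$), the totally geodesic hemispheres centered at $(c,0,\dots,0)$ of Euclidean radius $\sqrt{c^{2}+1}$ all pass through the north pole $o$, are entire CMC-$0$ Killing graphs with $u(o)=0$, and have $|\nabla u(o)|\to\infty$ as $c\to\infty$. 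This shows both that your barrier step fails and that the theorem as literally quoted here is missing a one-sided hypothesis on $u$ (in \cite{DLR} the solution is assumed bounded on one side, e.g.\ nonnegative --- a hypothesis dropped in the quotation). The standard repair, which is what Korevaar and \cite{DLR} actually do, is not to prove an oscillation bound at all but to build the height into the cutoff, e.g.\ $\zeta=\bigl(1-d(\cdot,o)^{2}/r^{2}-\varepsilon\,(u-u(o))\bigr)^{+}$, so that the test function automatically vanishes where $u$ is large and only the one-sided bound enters. With that modification --- and with $\overline{\mathrm{Ric}}(\eta,\eta)$, $\gamma$ and $\bar\nabla_{Z}Z$ merely assumed bounded on the compact cylinder over $\overline{B_{r}(o)}$ rather than specialized to $\mathbb{H}^{n+1}$, since the theorem is stated for a general ambient $N$ --- your argument goes through.
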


\begin{theorem}
\label{dhl}{\hspace*{-1ex}}\textbf{. } Let $\Omega\subset M$ be a bounded
$C^{2,\alpha}$ domain and $\Gamma=\partial\Omega.$ Denote by
\[
C\left(  \Gamma\right)  :=\left\{  \Psi\left(  t,p\right)  \text{
$\vert$
}p\in\Gamma,\text{ }t\in\mathbb{R}\right\}  ,
\]
the Killing cylinder over $\Gamma$. Assume that the mean curvature $H_{\Gamma
}\ $of $C\left(  \Gamma\right)  $ is nonnegative with respect to the inner
orientation and that
\[
\inf_{\Omega}\operatorname*{Ric}\nolimits_{M}\geq-n\inf_{\Gamma}H_{\Gamma}%
^{2}.
\]
Let $H\geq0$ be such that $\inf_{\Gamma}H_{\Gamma}\geq H$. Then, given
$\varphi\in C^{0}(\Gamma)$ there exists a unique function $u\in C^{2,\alpha
}(\Omega)\cap C^{0}(\bar{\Omega})$ whose Killing graph has mean curvature $H$
with respect to the unit normal vector $\eta$ to the graph of $u$ satisfying
$\left\langle \eta,Z\right\rangle \leq0$ and $u|_{\Gamma}=\varphi$.
\end{theorem}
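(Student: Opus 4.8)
The plan is to first solve the Dirichlet problem for smooth boundary data $\varphi\in C^{2,\alpha}(\Gamma)$ by the method of continuity, and then recover the general case $\varphi\in C^{0}(\Gamma)$ by approximation. For the smooth case I would embed the problem in the one-parameter family obtained by replacing $H$ with $tH$ in (\ref{pde}), keeping the boundary data $\varphi$, for $t\in[0,1]$. I would then show that the set of $t$ for which this problem admits a solution $u_{t}\in C^{2,\alpha}(\bar{\Omega})$ is nonempty ($t=0$ is the minimal Killing graph problem, which is solvable under the stated hypotheses), open, and closed. Openness follows from the implicit function theorem, since the linearization of the operator in (\ref{pde}) at a solution is uniformly elliptic (the gradient being bounded) and has no zeroth-order term, hence is invertible between the appropriate Schauder spaces by the maximum principle and Schauder theory. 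Closedness, and hence the whole argument, reduces to a priori estimates in $C^{2,\alpha}$ that are independent of $t$.

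These estimates break into three pieces. First, the $C^{0}$ estimate: since the Killing cylinder $C(\Gamma)$ satisfies $H_{\Gamma}\geq H\geq 0$, Killing graphs solving the equation, translated along the flow $\Psi$, serve as upper and lower barriers, and the comparison principle for the operator in (\ref{pde}) bounds $\sup_{\bar{\Omega}}|u|$ in terms of $\sup_{\Gamma}|\varphi|$, $H$, and the geometry of $\Omega$. Second, the boundary gradient estimate, where the hypothesis $\inf_{\Gamma}H_{\Gamma}\geq H$ is decisive: one builds local barriers near $\Gamma$ by composing the normal distance to $\Gamma$ with a suitable concave function, using that $C(\Gamma)$ is itself a hypersurface of mean curvature $H_{\Gamma}\geq H$, which forces $|\nabla u|$ to be controlled on $\Gamma$. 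Third, the interior gradient estimate is furnished directly by Theorem \ref{korevaar}, applied at each interior point with $r$ comparable to the distance to $\Gamma$. The Ricci lower bound $\inf_{\Omega}\operatorname{Ric}_{M}\geq -n\inf_{\Gamma}H_{\Gamma}^{2}$ is the Serrin-type compatibility condition that glues these together: applied through the maximum principle for the gradient function, it guarantees that the interior gradient cannot exceed its boundary value, so the boundary and interior bounds combine into a global bound $|\nabla u|\leq C$. The equation is then uniformly elliptic, De Giorgi--Nash--Moser yields a $C^{1,\beta}$ bound, and Schauder theory upgrades this to the required $C^{2,\alpha}$ bound (and to $C^{\infty}$ by bootstrapping).

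To pass to $\varphi\in C^{0}(\Gamma)$, I would take $\varphi_{k}\in C^{2,\alpha}(\Gamma)$ with $\varphi_{k}\to\varphi$ uniformly, solve each problem to obtain $u_{k}$, and use the interior estimates, which are locally uniform in $k$, together with Schauder theory to extract a subsequence converging in $C^{2}_{\mathrm{loc}}(\Omega)$ to a solution $u\in C^{2,\alpha}(\Omega)$ of (\ref{pde}) with mean curvature $H$. Continuity up to $\bar{\Omega}$ and the boundary condition $u|_{\Gamma}=\varphi$ follow by sandwiching $u$ between the barriers built at each point of $\Gamma$ from the cylinder $C(\Gamma)$, which provide a uniform modulus of continuity at the boundary. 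Uniqueness in both steps is a consequence of the comparison principle for the operator in (\ref{pde}): if $u_{1},u_{2}$ are two solutions with the same boundary values, their difference satisfies a homogeneous linear elliptic equation with no zeroth-order term, so the maximum principle forces $u_{1}\equiv u_{2}$.

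The main obstacle is the boundary gradient estimate and the barrier construction supporting it. The delicate point is that the barriers must be adapted to the warped-product structure $M\times_{\rho}\mathbb{R}$ of the solid cylinder and must absorb the non-divergence term $\tfrac{\gamma}{w}\langle\nabla u,\bar{\nabla}_{Z}Z\rangle$ in (\ref{pde}); quantifying how the geometry of $C(\Gamma)$ enters the barrier and verifying that $\inf_{\Gamma}H_{\Gamma}\geq H$ together with the Ricci bound is exactly the threshold making the barrier a genuine supersolution and subsolution is where the real analytic work lies. Once both gradient estimates are established, the remainder is the standard continuity-method machinery.
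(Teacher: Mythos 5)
The paper does not actually prove this statement: Theorem \ref{dhl} is quoted from the literature (it is the existence result of Dajczer--Hinojosa--de Lira \cite{DHL}, stated here alongside the interior gradient estimate of \cite{DLR}), so there is no internal proof to compare against. On its own merits, your outline is essentially the standard strategy of the cited source: reduce the continuity method to a priori $C^{1}$ bounds; obtain the height and boundary gradient estimates from barriers built out of the Killing cylinder $C(\Gamma)$ under the Serrin-type hypotheses $\inf_{\Gamma}H_{\Gamma}\geq H$ and $\operatorname*{Ric}_{M}\geq-n\inf_{\Gamma}H_{\Gamma}^{2}$; get the interior gradient bound from a Korevaar-type estimate as in Theorem \ref{korevaar}; and pass from $C^{2,\alpha}$ to $C^{0}$ boundary data by uniform approximation together with the comparison principle (which, since the operator has no zeroth-order dependence on $u$, gives $\sup|u_{k}-u_{j}|\leq\sup_{\Gamma}|\varphi_{k}-\varphi_{j}|$ and hence uniform convergence up to $\bar{\Omega}$ -- a cleaner route to the boundary continuity than re-invoking barriers).

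Two soft spots you should repair. First, your continuity path deforms only the curvature, $H\mapsto tH$, keeping $\varphi$ fixed; the $t=0$ endpoint is then the minimal Killing graph Dirichlet problem, which is not free -- it is precisely the $H=0$ instance of the theorem being proved. The standard fix is to deform the boundary data as well, e.g.\ solve $Q_{tH}(u_{t})=0$ with $u_{t}|_{\Gamma}=t\varphi$, so that $u_{0}\equiv0$ is the starting solution. Second, your height estimate via ``translated Killing graphs'' is too quick: the translated leaves $\Psi(t,M)$ are totally geodesic, hence minimal, and for $H>0$ with the orientation $\langle\eta,Z\rangle\leq0$ they furnish a barrier on one side only; the bound on the other side genuinely requires a sub/supersolution built from the distance to $\Gamma$ using $H_{\Gamma}\geq H$. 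Relatedly, the Ricci hypothesis in the stated form $-n\inf_{\Gamma}H_{\Gamma}^{2}$ is tailored to making that distance-function barrier work (via Hessian/Riccati comparison for the distance to $\Gamma$), rather than serving as a separate gradient maximum principle gluing interior and boundary bounds as you describe. With these adjustments the outline is the correct and standard one, and the real content is, as you say, the barrier computation absorbing the term $\frac{\gamma}{w}\langle\nabla u,\bar{\nabla}_{Z}Z\rangle$ in (\ref{pde}).
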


\bigskip

Usually Perron's method uses \emph{continuous} sub (super) solutions as
admissible functions (see \cite{GT}, Section 2.8). In our case it is
convenient to consider the more general case of \emph{lower semi-continuous
functions}. This approach is also used for proving the existence of
$p-$harmonic functions (\cite{HKM}, \cite{HV}).

\medskip

We denote by $C_{b}^{0}\left(  \overline{M}\right)  $ the space of lower
semi-continuous functions on $\overline{M}.$ Recall that $\sigma\in C_{b}%
^{0}\left(  \overline{M}\right)  $ is a subsolution for $Q_{H}$ in $M$ if,
given a bounded subdomain $\Lambda\subset M,$ if $u\in C^{0}\left(
\overline{\Lambda}\right)  \cap C^{2}\left(  \Lambda\right)  $ satisfies
$Q_{H}\left(  u\right)  =0$ in $\Lambda$ and $\sigma|_{\partial\Lambda}\leq
u|_{\partial\Lambda}$ then $\sigma\leq u.$ Supersolution is defined by
replacing \textquotedblleft$\leq$\textquotedblright\ by \textquotedblleft%
$\geq$\textquotedblright$.$

\medskip

The validity of Perron's method can be proved for general Killing graphs and
general domains of a Riemannian manifold. However, we state and prove it only
on the case that the domain is a whole Cartan-Hadamard manifold and the
boundary data is prescribed at infinity, case that we are interested in this paper.

\medskip

Let $N$ be a Cartan-Hadamard manifold, that is, $N$ simply connected with
sectional curvature $K_{N}\leq0.$ Then it is well defined the asymptotic
boundary $\partial_{\infty}N$ of $N$, a set of equivalence classes of geodesic
rays of $N$. We recall that two geodesic rays $\alpha,\beta:\left[
0,\infty\right)  \rightarrow N$ are in the same class if there is $C>0$ such
that $d(\alpha(t),\beta(t))\leq C$ for all $t\in\left[  0,\infty\right)  $
where $t$ is the arc length. With the so called cone topology $\overline
{N}:=N\cup\partial_{\infty}N$ is a compactification of $N$ (see \cite{BO})$.$
Since $M$ is totally geodesic in $N,$ $M$ is also a Cartan-Hadamard manifold
and we may naturally consider $\partial_{\infty}M$ as a subset of
$\partial_{\infty}N.$ We assume that the flow $\Psi$ of the Killing field $Z$
extends continuously to $\Psi\colon\mathbb{R}\times\partial_{\infty
}N\rightarrow\partial_{\infty}N$.

\begin{lemma}
\label{comp}If $\sigma\in C_{b}^{0}\left(  \overline{M}\right)  ,$
$\overline{M}=M\cup\partial_{\infty}M,$ is a subsolution and $w\in
C^{2}\left(  M\right)  \cap C^{0}\left(  \overline{M}\right)  $ a solution of
of $Q_{H}$ in $M$ such that $\sigma|_{\partial_{\infty}M}\leq w|_{\partial
_{\infty}M}$ then $\sigma\leq w$ in $\overline{M}.$
\end{lemma}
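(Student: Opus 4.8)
The plan is to establish $\sigma\le w$ by a maximum-principle argument that confines the comparison to a large geodesic ball and then lets an auxiliary gap tend to zero. Two elementary facts drive the proof. First, the operator $Q_{H}$ depends on $u$ only through $\nabla u$, so $Q_{H}(u+c)=Q_{H}(u)$ for every constant $c$; consequently $w+\epsilon$ is again a solution of $Q_{H}$ in $M$ for each $\epsilon>0$. Second, $\overline{M}=M\cup\partial_{\infty}M$ is compact in the cone topology, and $w$ is continuous on it. It therefore suffices to prove $\sigma\le w+\epsilon$ on $\overline{M}$ for every $\epsilon>0$ and then let $\epsilon\downarrow0$.

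Fix $\epsilon>0$ and introduce the set
\[
A_{\epsilon}=\{x\in\overline{M}\ :\ \sigma(x)\ge w(x)+\epsilon\}.
\]
I would first record two properties of $A_{\epsilon}$. By the semicontinuity of $\sigma$ together with the continuity of $w$, the set $A_{\epsilon}$ is closed in $\overline{M}$; and by the boundary hypothesis $\sigma\le w$ on $\partial_{\infty}M$ one has $\sigma-w\le0<\epsilon$ there, so $A_{\epsilon}\cap\partial_{\infty}M=\emptyset$. Since $\overline{M}$ is compact, $A_{\epsilon}$ is a compact subset of $M$, and because $M=\bigcup_{R}B_{R}(o)$ is an increasing union of geodesic balls, there is an $R$ with $A_{\epsilon}\subset B_{R}(o)$.

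With such an $R$ fixed, the sphere $S_{R}=\partial B_{R}(o)$ is disjoint from $A_{\epsilon}$, so $\sigma<w+\epsilon$ at every point of $S_{R}$; that is, $\sigma|_{S_{R}}\le(w+\epsilon)|_{S_{R}}$. Applying the defining property of a subsolution on the bounded domain $\Lambda=B_{R}(o)$ to the solution $u=w+\epsilon$ then gives $\sigma\le w+\epsilon$ throughout $B_{R}(o)$, hence in particular on $A_{\epsilon}$. Every point of $M\setminus B_{R}(o)$ lies outside $A_{\epsilon}$ and therefore also satisfies $\sigma<w+\epsilon$, while on $\partial_{\infty}M$ we have $\sigma\le w\le w+\epsilon$. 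Thus $\sigma\le w+\epsilon$ on all of $\overline{M}$, and letting $\epsilon\downarrow0$ yields $\sigma\le w$.

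The only genuinely delicate point is the compactness of $A_{\epsilon}$, i.e. controlling $\sigma$ as one approaches $\partial_{\infty}M$: this is exactly where the semicontinuity of the admissible functions in $C_{b}^{0}(\overline{M})$ is indispensable, since it prevents the ``definite-gap'' failure set $A_{\epsilon}$ from accumulating only at the asymptotic boundary, where the desired inequality is assumed to hold. Once $A_{\epsilon}$ is trapped inside a fixed ball, everything else follows formally from the definition of subsolution and the constant-invariance of $Q_{H}$; in particular, this argument needs neither the existence and uniqueness result of Theorem \ref{dhl} nor the interior gradient estimate of Theorem \ref{korevaar}.
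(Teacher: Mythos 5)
Your argument is essentially the paper's own proof in a different packaging. The paper supposes $\sigma(p)>w(p)$ at some interior point, replaces $w$ by $w_{t}=\Psi(t,w)$ --- which is exactly your $w+t$, and is a solution for the same reason you give, namely that $Q_{H}$ sees only $\nabla u$ --- observes that the maximal open set $U$ where $w_{t}<\sigma$ must be bounded because $w_{t}>\sigma$ on $\partial_{\infty}M$, and then applies the subsolution comparison on $U$. You run the same shift with $\epsilon$ in place of $t$, work with the set $A_{\epsilon}$ where $\sigma\ge w+\epsilon$ instead of the open set $U$, and apply the comparison on a geodesic ball containing it. Both versions turn on the same two ingredients (invariance of $Q_{H}$ under adding constants, and confinement of the bad set away from $\partial_{\infty}M$), and neither needs Theorem \ref{korevaar} or Theorem \ref{dhl}; there is no substantive difference of route.

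The one point to flag is precisely the step you single out as delicate. You assert that $A_{\epsilon}=\{\sigma\ge w+\epsilon\}$ is closed ``by the semicontinuity of $\sigma$.'' With the paper's convention --- $C_{b}^{0}(\overline{M})$ is the class of \emph{lower} semi-continuous functions --- this points the wrong way: for a lower semi-continuous $f$ the sets $\{f>c\}$ are open and $\{f\le c\}$ are closed, so $\{\sigma-w\ge\epsilon\}$ need not be closed, and nothing in the hypotheses prevents it from accumulating on $\partial_{\infty}M$. What your argument (and the paper's) actually needs is $\limsup_{M\ni y\to x}\sigma(y)\le\sigma(x)$ at each $x\in\partial_{\infty}M$, i.e.\ upper semicontinuity of $\sigma$ at the asymptotic boundary. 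For instance, when $H=0$ the function equal to $1$ on $M$ and to $0$ on $\partial_{\infty}M$ is lower semi-continuous and satisfies the stated subsolution property on every bounded $\Lambda$, yet it violates the conclusion against $w\equiv0$. To be fair, the paper's own proof makes the identical leap in the sentence asserting that $U$ must be bounded because $w_{t}|_{\partial_{\infty}M}>\sigma|_{\partial_{\infty}M}$, so you have reproduced the published argument faithfully, weak point included; but as written the justification of the compactness of $A_{\epsilon}$ does not follow from lower semicontinuity alone, and the clean fix is to require of the admissible subsolutions the upper-semicontinuity-type bound at $\partial_{\infty}M$ just described.
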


\begin{proof}
If $\sigma(p)>w(p)$ at some point $p\in M$ then there is $t>0$ small enough
such that $w_{t}:=\Psi\left(  t,w\right)  <\sigma$ in a neighborhood $U$ of
$p.$ Assume that $U$ is the biggest neighborhood of $p$ such that
$w_{t}<\sigma.$ This neighborhood $U$ has to be bounded since $w_{t}%
|_{\partial_{\infty}M}>\sigma|_{\partial_{\infty}M}$. But this is a
contradiction since $w_{t}|_{\partial U}=\sigma|_{\partial U},$ and $w_{t}$ is
a solution of $Q_{H}=0.$ This proves the lemma.
\end{proof}

\medskip

\medskip We next prove Perron's method. It is stated in a form which is
convenient for our later use.

\begin{theorem}
\label{perron}Let $N$ be a Cartan-Hadamard manifold. Let $\phi\in C^{0}\left(
\partial_{\infty}M\right)  $ and $H\in\mathbb{R}$ be given$.$ Assume that
there are a subsolution $\sigma\in C_{b}^{0}\left(  \overline{M}\right)  $ and
a solution $w\in C^{2}\left(  M\right)  \cap C^{0}\left(  \overline{M}\right)
$ of $Q_{H}$ in $M$ such that $\sigma|_{\partial_{\infty}M}\leq\phi\leq
w|_{\partial_{\infty}M}.$ Set%

\[
S_{\phi}=\left\{  v\in C_{b}^{0}\left(  \overline{M}\right)  \text{
$\vert$
}v\text{ is a subsolution of }Q_{H}\text{ such that }v|_{\partial_{\infty}%
M}<\phi\right\}
\]
and%
\[
u_{\phi}(x):=\sup_{v\in S_{\phi}}v(x),\text{ }x\in\overline{M}.
\]
Then $u_{\phi}\in C^{\infty}\left(  M\right)  $ and $Q_{H}\left(  u_{\phi
}\right)  =0.$
\end{theorem}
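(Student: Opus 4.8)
The plan is to run the classical Perron argument, now in the lower semicontinuous Killing-graph setting, with Lemma \ref{comp} playing the role of the comparison principle at infinity, Theorem \ref{dhl} supplying solvability of the interior Dirichlet problem, and Theorem \ref{korevaar} supplying the compactness needed to pass to the limit. First I would check that $S_\phi$ is nonempty and that $u_\phi$ is finite. Since the flow $\Psi(t,\cdot)$ of the Killing field $Z$ is an isometry sending $\mathrm{Gr}(u)$ to $\mathrm{Gr}(u+t)$, adding a constant preserves the subsolution property and shifts boundary values by $t$; hence $\sigma-\epsilon$ is a subsolution with $(\sigma-\epsilon)|_{\partial_\infty M}=\sigma|_{\partial_\infty M}-\epsilon<\phi$, so $\sigma-\epsilon\in S_\phi$ and $S_\phi\neq\emptyset$. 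On the other hand, every $v\in S_\phi$ satisfies $v|_{\partial_\infty M}<\phi\le w|_{\partial_\infty M}$, so Lemma \ref{comp} gives $v\le w$; taking the supremum yields $\sigma-\epsilon\le u_\phi\le w$ on $\overline M$, so $u_\phi$ is well defined and bounded.

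Next I would establish the two standard closure operations, both read off the comparison definition of subsolution. If $v_1,v_2$ are subsolutions and a solution $u$ dominates $\max(v_1,v_2)$ on $\partial\Lambda$, then it dominates each $v_i$ on $\partial\Lambda$, hence each $v_i$ on $\Lambda$, so $\max(v_1,v_2)$ is again a lower semicontinuous subsolution. The second operation is the Killing replacement: given $v\in S_\phi$ and a geodesic ball $B=B_r(o)$, define $V$ to equal $v$ outside $B$ and to equal the solution of $Q_H=0$ in $B$ with boundary values $v|_{\partial B}$ inside $B$ (in the semicontinuous framework one approximates the lower semicontinuous datum $v|_{\partial B}$ from below by continuous functions before invoking solvability). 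For this I would solve the interior Dirichlet problem with Theorem \ref{dhl}, which forces me to restrict to \emph{admissible} balls: I must check that for $r$ small the Killing cylinder over $\partial B_r(o)$ has mean curvature $\ge H$ with respect to the inner orientation and that $\inf_B \operatorname{Ric}_M\ge -n\inf_{\partial B}H_{\partial B}^2$. As $r\to0$ the sphere's mean curvature blows up like $n/r$ while $\operatorname{Ric}_M$ stays bounded below on compacts (here $K_N\le 0$ is used), so both conditions hold for small $r$; verifying this quantitatively, and that such balls exhaust $M$, is the main technical point. Comparison then shows $V\ge v$, that $V$ is again a subsolution, and $V\in S_\phi$, since only values inside a bounded ball were changed and $V|_{\partial_\infty M}=v|_{\partial_\infty M}$.

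With these tools the conclusion is the usual Perron limiting scheme. Fixing $o\in M$ and an admissible ball $B=B_r(o)$, I would choose $v_k\in S_\phi$ with $v_k(o)\to u_\phi(o)$, replace $v_k$ by $\max(v_1,\dots,v_k,\sigma-\epsilon)$ to make the sequence monotone and bounded below, and then Killing-replace each on $B$ to obtain $V_k\in S_\phi$ that are increasing, bounded above by $w$, and solve $Q_H=0$ in $B$. Theorem \ref{korevaar} gives interior gradient bounds depending only on $V_k(o)$, the geometry, and $H$; since $\sigma-\epsilon\le V_k\le w$ these bounds are uniform on compact subsets of $B$, so the equation is locally uniformly elliptic with bounded coefficients, and Schauder estimates with Arzel\`a--Ascoli yield $C^\infty_{\mathrm{loc}}$ convergence of $V_k$ to a solution $V$ of $Q_H=0$ in $B$ with $V\le u_\phi$ and $V(o)=u_\phi(o)$. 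It remains to prove $V\equiv u_\phi$ in $B$: if $V(y)<u_\phi(y)$ for some $y$, pick $\bar v\in S_\phi$ with $\bar v(y)>V(y)$, run the same scheme on $\max(V_k,\bar v)$ to get a solution $\tilde V\ge V$ with $\tilde V(o)=u_\phi(o)=V(o)$; since $Q_H$ carries no zeroth-order term, the difference of the two solutions satisfies a homogeneous linear elliptic equation, so the strong maximum principle forces $V\equiv\tilde V$, contradicting $\tilde V(y)\ge\bar v(y)>V(y)$. Hence $u_\phi$ solves $Q_H=0$ in $B$, and as $o$ was arbitrary, $Q_H(u_\phi)=0$ throughout $M$; elliptic regularity then upgrades $u_\phi$ to $C^\infty(M)$. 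I expect the admissibility of small balls (the hypotheses of Theorem \ref{dhl}) and the promotion of monotone pointwise convergence to $C^\infty_{\mathrm{loc}}$ convergence via Theorem \ref{korevaar} to be the two steps requiring the most care.
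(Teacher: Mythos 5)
Your proposal is correct and follows essentially the same route as the paper: Perron's method with Lemma \ref{comp} as the comparison principle at infinity, Theorem \ref{dhl} (applicable on small geodesic balls, whose Killing cylinders have large mean curvature by Hessian comparison) for the local lift, and Theorem \ref{korevaar} for the compactness needed to pass to the limit. The only differences are cosmetic: you spell out the final identification of the lifted limit with $u_{\phi}$ via the maximum principle, which the paper delegates to \cite{GT}, Section 2.8, and you approximate the semicontinuous boundary datum from below where the paper uses a decreasing approximation from above.
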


\begin{proof}
From the hypothesis and Lemma \ref{comp} it follows that $u_{\phi}$ is a well
defined function on $\overline{M}.$ Given $x\in M,$ let $B_{r}(x)\subset M$ be
the open geodesic ball centered at $x$ and with radius $r>0$ in $M.$ From the
Hessian comparison theorem, for $r=r_{x}$ sufficiently small, if
\[
C_{r_{x}}:=\left\{  \Psi\left(  t,p\right)  \text{
$\vert$
}p\in\partial B_{r_{x}}(x),\text{ }t\in\mathbb{R}\right\}  ,
\]
then $H_{x}\geq\left\vert H\right\vert ,$ where $H_{x}$ is the mean curvature
of $C_{r_{x}}$ with respect to the inner orientation$\ $and, clearly,
$\inf_{B_{r_{x}}}\operatorname*{Ric}\nolimits_{M}\geq-n\inf_{C_{r_{x}}}%
H_{x}^{2}.$

Now, take a sequence $s_{m}\in C_{b}^{0}\left(  \overline{M}\right)  $ of
subsolutions such that $s_{m}|_{\partial_{\infty}M}\leq\phi$ and $\lim
_{m}s_{m}(x)=u_{\phi}(x).$ For a fixed $m,$ let $f_{n}\in C^{0}\left(
\overline{M}\right)  $ be a decreasing sequence of functions converging
pointwise to $s_{m}$ from above. By Theorem \ref{dhl} there is a solution
$v_{n,x}\in C^{0}\left(  \overline{B_{r_{x}}(x)}\right)  \cap C^{\infty
}\left(  B_{r_{x}}(x)\right)  $ of $Q_{H}=0$ in $B_{r_{x}}(x)$ such that
$v_{n,x}|_{\partial B_{r_{x}}(x)}=f_{n}|_{\partial B_{r_{x}}(x)}.$ Using the
compactness of $\{v_{n,x}\}$ on compact subsets of $B_{r_{x}}(x)$ that follows
from Theorem \ref{korevaar} it follows that $v_{n,x}$ contains a subsequence
converging uniformly on compact subsets of $B_{r_{x}}(x)$ to a solution
$w_{m,x}\in C^{\infty}\left(  B_{r_{x}}(x)\right)  $ of $Q_{H}=0.$ We now
define the CMC $H$ lift $t_{m}\in C_{b}^{0}\left(  \overline{M}\right)  $ of
$s_{m}$ by setting%
\[
t_{m}(y)=\left\{
\begin{array}
[c]{ll}%
s_{m}(y) & \text{if }y\in M\backslash B_{r_{x}}\left(  x\right) \\
w_{m,x}(y) & \text{if }y\in B_{r_{x}}\left(  x\right)  .
\end{array}
\right.
\]
\ Using again Theorem \ref{korevaar} it follows that $w_{m,x}$ contains a
subsequence converging uniformly on compact subsets of $B_{r_{x}}(x)$ to a
solution $w_{x}\in C^{\infty}\left(  B_{r_{x}}(x)\right)  $ of $Q_{H}=0.$ As
in \cite{GT}, Section 2.8, we may prove that $w_{x}=u_{\phi}|_{B_{r_{x}}(x)}$.
This proves that $u_{\phi}\in C^{\infty}\left(  M\right)  $ and satisfies
$Q_{H}\left(  u_{\phi}\right)  =0.$
\end{proof}

\section{\label{hy}Asymptotic Plateau's problem for CMC hypersurfaces in
$\mathbb{H}^{n+1}$}

\qquad Let $\mathbb{H}^{n+1},$ $n\geq2,$ be the hyperbolic space with
sectional curvature $-1$. It is well known that there is a conformal
diffeomorphism $F$ between $\overline{\mathbb{H}}^{n+1}$ and the unit closed
ball $\overline{B}$ of $\mathbb{R}^{n+1}$, and any Killing vector field in $B$
is the restriction of a conformal vector field of $\mathbb{R}^{n+1}$ that
leaves $B$ invariant (see \cite{C}). Then, any Killing field of $\mathbb{H}%
^{n+1}$ is well defined in $\partial_{\infty}\mathbb{H}^{n+1}$ and its flow
extends continuously to $\partial_{\infty}\mathbb{H}^{n+1}$. A $k-$dimensional
sphere $E$ of $\partial_{\infty}\mathbb{H}^{n+1}$ is defined as $E=F^{-1}%
(\widetilde{E})$ where $\widetilde{E}\subset\partial B$ is an usual
$k-$dimensional sphere, that is, the intersection between $\partial B$ and a
$\left(  k+1\right)  -$dimensional linear subspace of $\mathbb{R}^{n+1}.$ If
$k=2$ then $E$ is a circle and if $k=n$ then $E$ is a hypersphere.

\medskip

\begin{theorem}
\label{hyp}Let $p_{1},p_{2}$ be two distinct points in the asymptotic boundary
$\partial_{\infty}\mathbb{H}^{n+1}$ of $\mathbb{H}^{n+1}$ and $\Gamma
\subset\partial_{\infty}\mathbb{H}^{n+1}$be\ a compact embedded topological
hypersurface not passing neither through $p_{1}$ nor $p_{2}$ and that
intersects transversely any arc of circle of $\partial_{\infty}\mathbb{H}%
^{n+1}$ having $p_{1}$ and $p_{2}$ as ending points. Let $\left\vert
H\right\vert <1$ be given. Then there exists an unique properly embedded,
complete $C^{\infty}$ hypersurface $S$ of $\mathbb{H}^{n+1}$ with CMC $H$ such
that $\partial_{\infty}S=\Gamma$ and $\overline{S}=S\cup\Gamma$ is an embedded
compact topological hypersurface of $\overline{\mathbb{H}}^{n+1}.$

Moreover there exists a totally geodesic hypersurface $\mathbb{H}^{n}$ of
$\mathbb{H}^{n+1}$, a Killing field $X$ which orbits are hypercycles
orthogonal to $\mathbb{H}^{n}$ and a function $u\in C^{\infty}\left(
\mathbb{H}^{n}\right)  \cap C^{0}\left(  \overline{\mathbb{H}}^{n}\right)  $
such that
\[
\overline{S}=\left\{  \Psi_{X}(u(x),x)\text{
$\vert$
}x\in\overline{\mathbb{H}}^{n}\right\}  ,
\]
where $\Psi_{X}$ is the flow of $X.$
\end{theorem}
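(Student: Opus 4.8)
The plan is to realize $\overline{S}$ as an $X$-Killing graph over a totally geodesic $\mathbb{H}^{n}$ and to produce the graphing function by Perron's method (Theorem \ref{perron}). First I would fix the geometry: let $X$ be the hyperbolic Killing field of $\mathbb{H}^{n+1}$ whose fixed points at infinity are $p_{1},p_{2}$, so that its orbits in $\mathbb{H}^{n+1}$ are the hypercycles joining $p_{1}$ to $p_{2}$ and its orbits on $\partial_{\infty}\mathbb{H}^{n+1}$ are exactly the arcs of circles from $p_{1}$ to $p_{2}$; let $\mathbb{H}^{n}$ be the totally geodesic hypersurface orthogonal to all these orbits, whose asymptotic boundary $\partial_{\infty}\mathbb{H}^{n}$ is the hypersphere separating $p_{1}$ from $p_{2}$. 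In the half-space model with $p_{1}=0$, $p_{2}=\infty$, this is the upper unit hemisphere and $X$ is the dilation field, as in \cite{GS}. By hypothesis every orbit of $X$ meets $\Gamma$ transversally in a single point, so $\Gamma$ is the $X$-Killing graph of a function $\phi\in C^{0}(\partial_{\infty}\mathbb{H}^{n})$; the theorem thus reduces to solving the asymptotic Dirichlet problem $Q_{H}(u)=0$ in $\mathbb{H}^{n}$, $u|_{\partial_{\infty}\mathbb{H}^{n}}=\phi$, and recovering $S$ as the graph.

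Next I would set up the barriers needed to run Theorem \ref{perron}. Since $\left\vert H\right\vert <1$, there is a totally umbilic hypersurface of $\mathbb{H}^{n+1}$ (an equidistant hypersurface to a totally geodesic hyperplane) with constant mean curvature $H$ which, positioned symmetrically about the axis of $X$ and sharing the asymptotic boundary $\partial_{\infty}\mathbb{H}^{n}$, is a global $X$-Killing graph of a function $u_{0}$ with $u_{0}\to 0$ at $\partial_{\infty}\mathbb{H}^{n}$. Flowing it by $\Psi_{X}(c,\cdot)$ produces, for every $c\in\mathbb{R}$, a CMC $H$ graph of $u_{0}+c$ with constant asymptotic boundary value $c$. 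Taking $c=\max\phi$ and $c=\min\phi$ gives CMC $H$ graphs $w$ and $\sigma$ in $C^{2}(\mathbb{H}^{n})\cap C^{0}(\overline{\mathbb{H}}^{n})$, serving as the supersolution and subsolution required by Theorem \ref{perron}, with $\sigma|_{\partial_{\infty}\mathbb{H}^{n}}\leq\phi\leq w|_{\partial_{\infty}\mathbb{H}^{n}}$. Theorem \ref{perron} then yields $u_{\phi}\in C^{\infty}(\mathbb{H}^{n})$ with $Q_{H}(u_{\phi})=0$, and Lemma \ref{comp} gives $\sigma\leq u_{\phi}\leq w$.

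The hard part is boundary attainment: showing that $u_{\phi}$ extends continuously to $\overline{\mathbb{H}}^{n}$ with $u_{\phi}|_{\partial_{\infty}\mathbb{H}^{n}}=\phi$, since Theorem \ref{perron} by itself gives no information at infinity. For this I would build, at each $\xi_{0}\in\partial_{\infty}\mathbb{H}^{n}$ and each $\varepsilon>0$, local upper and lower barriers from totally umbilic CMC $H$ hypersurfaces whose asymptotic boundary hyperspheres touch $\Gamma$ near $q_{0}=\Psi_{X}(\phi(\xi_{0}),\xi_{0})$ and otherwise lie above (resp. below) $\Gamma$; the continuity of $\phi$ together with the transversality and annulus hypotheses guarantee that such hyperspheres exist and are $X$-Killing graphs near $\xi_{0}$, with boundary value $\phi(\xi_{0})\pm\varepsilon$. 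Comparing with $u_{\phi}$ through Lemma \ref{comp} and letting $\varepsilon\to 0$ pins $\lim_{x\to\xi_{0}}u_{\phi}(x)=\phi(\xi_{0})$. This is where the merely $C^{0}$ regularity of $\Gamma$ must be absorbed by approximation, and I expect it to be the main technical obstacle.

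Finally I would read off the remaining conclusions. With $u_{\phi}\in C^{\infty}(\mathbb{H}^{n})\cap C^{0}(\overline{\mathbb{H}}^{n})$ and $u_{\phi}|_{\partial_{\infty}\mathbb{H}^{n}}=\phi$, the Killing graph $S=\{\Psi_{X}(u_{\phi}(x),x)\mid x\in\mathbb{H}^{n}\}$ is a complete $C^{\infty}$ CMC $H$ hypersurface by the characterization of \cite{DHL}, and $x\mapsto\Psi_{X}(u_{\phi}(x),x)$ is a continuous injection of the closed ball $\overline{\mathbb{H}}^{n}$ into $\overline{\mathbb{H}}^{n+1}$ (injective because distinct orbits are disjoint), hence a homeomorphism onto the compact set $\overline{S}=S\cup\Gamma$, which is therefore an embedded compact topological hypersurface with $\partial_{\infty}S=\Gamma$; properness and completeness then follow at once. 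Uniqueness follows from Lemma \ref{comp}: any second such hypersurface is the $X$-Killing graph of a solution of $Q_{H}=0$ with the same asymptotic data $\phi$, and two applications of the comparison lemma force the two graphing functions, hence the two hypersurfaces, to coincide.
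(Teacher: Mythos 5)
Your overall strategy coincides with the paper's: reduce to the asymptotic Dirichlet problem for $Q_{H}$ on the totally geodesic $\mathbb{H}^{n}$ orthogonal to the hypercycle orbits of $X$, run Perron's method (Theorem \ref{perron}) between global umbilic barriers, and recover the boundary values by local umbilic barriers at each $\xi_{0}\in\partial_{\infty}\mathbb{H}^{n}$. But the boundary-attainment step, which you correctly identify as the crux, has a concrete hole as written. Lemma \ref{comp} compares a \emph{global} subsolution in $C_{b}^{0}\left(\overline{M}\right)$ against a \emph{global} solution in $C^{2}(M)\cap C^{0}\left(\overline{M}\right)$; your local barriers near $\xi_{0}$ are graphs over proper subdomains of $\mathbb{H}^{n}$ and cannot be fed into that lemma directly. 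The paper resolves this by patching: the cap over $D_{k}^{+}$ is glued to the previous barrier on $\mathbb{H}^{n}\backslash D_{k}^{+}$ to produce a new element of $C_{b}^{0}\left(\overline{\mathbb{H}}^{n}\right)$ (the lift (\ref{sub})), and one builds an increasing sequence $\sigma_{k}$ and a decreasing sequence $w_{k}$ of such patched barriers converging to $\phi(x_{0})$ at $x_{0}$. Moreover, on the upper side the patched $w_{k}$ is only a \emph{supersolution}, not a solution, so even after patching Lemma \ref{comp} does not apply; the paper runs a separate induction showing $\delta<w_{k}$ for every $\delta\in S_{\phi}$ by sliding $\delta_{t}=\Psi(t,\delta)$ against the solution piece $\beta_{k}$ on the set where the inequality fails. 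Your sketch needs both of these ingredients; "comparing with $u_{\phi}$ through Lemma \ref{comp}" does not by itself close the argument.

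The uniqueness argument is also incomplete. The theorem asserts uniqueness among \emph{all} properly embedded complete CMC $H$ hypersurfaces with $\partial_{\infty}S=\Gamma$ and $\overline{S}$ a compact embedded topological hypersurface, not only among $X$-Killing graphs; you begin by assuming the competitor is the graph of a solution of $Q_{H}=0$ with the same asymptotic data, which is not part of the hypothesis. The intended argument behind the paper's one-line appeal to the maximum principle is a sliding/tangency argument: translate $S$ by $\Psi_{X}(t,\cdot)$, observe that for $t\neq0$ the translated asymptotic boundary is disjoint from $\Gamma$ so the translates eventually clear the competitor, and apply the interior tangency principle at a first point of contact as $t$ returns to $0$. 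Either supply that argument or weaken the uniqueness claim to uniqueness within the class of $X$-Killing graphs.
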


\medskip

\begin{theorem}
\label{par}Let $p\ $be a point in the asymptotic boundary $\partial_{\infty
}\mathbb{H}^{n+1}$ of $\mathbb{H}^{n+1}$ and $\Gamma\subset\partial_{\infty
}\mathbb{H}^{n+1}$be\ a compact embedded topological hypersurface passing
through $p.$ Assume that there are two hyperspheres $E_{1}$ and $E_{2}$ of
$\partial_{\infty}\mathbb{H}^{n+1}$ which are tangent to $p\in E_{1}\cap
E_{2}$ and such that $\Gamma$ is contained between $E_{1}$ and $E_{2},$ that
is, $\Gamma\subset U_{1}\cap U_{2}$ where $U_{i}\subset\partial_{\infty
}\mathbb{H}^{n+1}$ is the closure of the connected component of $\partial
_{\infty}\mathbb{H}^{n+1}\backslash E_{i}$ that contains $E_{j},$ $i\neq j$.

We require, moreover, that any circle of $\partial_{\infty}\mathbb{H}^{n+1}$
passing through $p$ orthogonal to $E_{1},$ intersects $\Gamma$ at one and only
one point$.$

Then, given $\left\vert H\right\vert <1,$ there exists a unique properly
embedded, complete $C^{\infty}$ hypersurface $S$ of $\mathbb{H}^{n+1}$ with
CMC $H$ such that $\partial_{\infty}S=\Gamma$ and $\overline{S}=S\cup\Gamma$
is a compact embedded topological hypersurface of $\overline{\mathbb{H}}%
^{n+1}.$

Moreover, $S$ is a parabolic graph that is, there exists a totally geodesic
hypersurface $\mathbb{H}^{n}$ of $\mathbb{H}^{n+1}$, a Killing field $Y$ which
orbits in $\mathbb{H}^{n+1}$ are horocycles orthogonal to $\mathbb{H}^{n}%
\ $and in $\partial_{\infty}\mathbb{H}^{n+1}$ are the circles orthogonal to
$E_{1}$ at $p,$ and a function $u\in C^{\infty}\left(  \mathbb{H}^{n}\right)
\cap C^{0}\left(  \overline{\mathbb{H}}^{n}\backslash\left\{  p\right\}
\right)  $ such that
\[
S=\left\{  \Psi_{Y}(u(x),x)\text{
$\vert$
}x\in\mathbb{H}^{n}\right\}  ,
\]
where $\Psi_{Y}$ is the flow of $Y.$
\end{theorem}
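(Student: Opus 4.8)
The plan is to reduce Theorem \ref{par} to a direct application of Perron's method (Theorem \ref{perron}) in the half-space model $\mathbb{R}_+^{n+1}$, exactly as suggested by the introduction. Working in this model we fix $\mathbb{H}^n=\{x_1=0\}$, take $Y=(1,0,\dots,0)$ so that $\Psi_Y(t,x)=x+te_1$ is horizontal translation, and observe that $p=\{x_{n+1}=+\infty\}$. Under the hypotheses, $\Gamma$ is a bounded $C^0$ Euclidean graph over $\{x_1=0,\,x_{n+1}=0\}$, so there is a continuous $\phi\colon\partial_\infty\mathbb{H}^n\to\mathbb{R}$ whose $Y$-Killing graph is $\Gamma$. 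The whole problem then becomes: solve $Q_H(u)=0$ on $\mathbb{H}^n$ with $u|_{\partial_\infty\mathbb{H}^n}=\phi$, and verify that the resulting Killing graph is properly embedded, complete, and has the claimed asymptotic boundary.

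The key steps, in order, are as follows. First I would compute the geometric data $\gamma=1/\langle Y,Y\rangle$ and $\bar\nabla_Y Y$ for the parabolic Killing field and write out the mean-curvature operator $Q_H$ of \eqref{pde} explicitly; because the orbits are horocycles this is the genuinely new computation not covered by the hyperbolic case, and it is where I expect the only real calculational work. Second, I would exhibit the barriers required by Theorem \ref{perron}: a subsolution $\sigma$ and a supersolution $w$ in $C^2(\mathbb{H}^n)\cap C^0(\overline{\mathbb{H}}^n)$ with $\sigma|_{\partial_\infty\mathbb{H}^n}\le\phi\le w|_{\partial_\infty\mathbb{H}^n}$. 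The natural candidates are pieces of known CMC hypersurfaces of $\mathbb{H}^{n+1}$ with $|H|<1$ — for instance the $Y$-Killing graphs of the two tangent hyperspheres $E_1,E_2$, or translates thereof, which by the pinching hypothesis $\Gamma\subset U_1\cap U_2$ straddle $\Gamma$ from the two sides and can be taken as super/sub-solutions after comparing orientations. Third, having produced these barriers, Theorem \ref{perron} immediately yields $u_\phi\in C^\infty(\mathbb{H}^n)$ with $Q_H(u_\phi)=0$, and Lemma \ref{comp} sandwiches $\sigma\le u_\phi\le w$, forcing $u_\phi$ to attain the correct boundary values $\phi$ on $\partial_\infty\mathbb{H}^n\setminus\{p\}$.

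The remaining steps are geometric rather than analytic. From $u\in C^\infty(\mathbb{H}^n)\cap C^0(\overline{\mathbb{H}}^n\setminus\{p\})$ I would define $S=\{\Psi_Y(u(x),x)\mid x\in\mathbb{H}^n\}$ and check that it is a complete, properly embedded hypersurface: completeness and properness follow because $S$ is a global Killing graph over the complete manifold $\mathbb{H}^n$ and the barrier sandwich controls $u$ uniformly on compacta. To identify $\partial_\infty S$, I would use that the flow $\Psi_Y$ extends continuously to $\partial_\infty\mathbb{H}^{n+1}$ (established at the start of Section \ref{hy}): the boundary graph of $\phi$ is precisely $\Gamma$, and the single missing point $p$, where $u$ need not extend, contributes $\{x_{n+1}=+\infty\}$ to the asymptotic boundary, giving $\partial_\infty S=\Gamma$ (equivalently $\Gamma\cup\{x_{n+1}=+\infty\}$ in the model statement). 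Embeddedness of $\overline{S}=S\cup\Gamma$ as a compact topological hypersurface follows since a Killing graph is embedded by construction and the transversality hypothesis (every circle through $p$ orthogonal to $E_1$ meets $\Gamma$ exactly once) guarantees $\Gamma$ is itself an embedded $Y$-graph over $\partial_\infty\mathbb{H}^n$. Uniqueness follows from the comparison Lemma \ref{comp}, which forces any two solutions with the same asymptotic data to coincide. The main obstacle I anticipate is step two: constructing barriers that are valid up to the parabolic point $p$, where the horocyclic foliation degenerates and the asymptotic geometry must be handled carefully so that $\sigma$ and $w$ genuinely bound $\phi$ along $\partial_\infty\mathbb{H}^n\setminus\{p\}$ while remaining (super/sub)solutions of $Q_H$.
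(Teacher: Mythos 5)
Your setup coincides with the paper's: parabolic Killing field $Y$, the totally geodesic $\mathbb{H}^n$ with $\partial_\infty\mathbb{H}^n=E_1$, the global subsolution $\sigma=0$ and the global supersolution $w$ given by the CMC $|H|$ hypersphere with asymptotic boundary $E_2$, followed by Theorem \ref{perron}. But there is a genuine gap at your third step. You claim that the sandwich $\sigma\le u_\phi\le w$ coming from Lemma \ref{comp} "forces $u_\phi$ to attain the correct boundary values $\phi$." It does not. The global barriers only satisfy $\sigma|_{\partial_\infty\mathbb{H}^n}\le\phi\le w|_{\partial_\infty\mathbb{H}^n}$; at a boundary point $x_0$ they generally have $\sigma(x_0)<\phi(x_0)<w(x_0)$, so being trapped between them gives no information about $\lim_{x\to x_0}u_\phi(x)$. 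Perron's method produces a smooth interior solution, but boundary regularity always requires barriers that \emph{pinch to $\phi(x_0)$ at each boundary point}, and constructing these is the bulk of the paper's proof: at each $x_0\in\partial_\infty\mathbb{H}^n\setminus\{p\}$ one builds inductively an increasing sequence of subsolutions $\sigma_k$ (by lifting with totally geodesic hyperspheres $T_k$ whose asymptotic boundaries are the largest hyperspheres centered at $\Psi(\sigma_k(x_0),x_0)$ contained in the component of $\partial_\infty\mathbb{H}^{n+1}\setminus\Gamma$ on the $\mathbb{H}^n$ side) and a decreasing sequence of supersolutions $w_k$ (by descending with CMC $H$ hyperspheres on the other side of $\Gamma$), proves $\sigma_k\le u_\phi\le w_k$ by induction using the comparison argument, and uses $\sigma_k(x_0)\to\phi(x_0)$, $w_k(x_0)\to\phi(x_0)$ to conclude continuity. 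Your proposal contains no substitute for this local barrier construction, and your closing paragraph misidentifies the difficulty as lying only at the degenerate point $p$, whereas the issue is at \emph{every} point of $\partial_\infty\mathbb{H}^n\setminus\{p\}$.

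A secondary, smaller point: your candidate barriers are described as "$Y$-Killing graphs of the two tangent hyperspheres $E_1,E_2$"; $E_1$ and $E_2$ live in $\partial_\infty\mathbb{H}^{n+1}$, so what is meant must be CMC hypersurfaces of $\mathbb{H}^{n+1}$ asymptotic to them, and one must check the sign of the mean curvature relative to the orientation $\langle\eta,Y\rangle\le0$ so that one of them is a supersolution (the paper takes the CMC $|H|$ hypersphere asymptotic to $E_2$, which is an exact solution of $Q_H=0$, and the zero function, whose graph is $\mathbb{H}^n$ itself). Your identification of $\partial_\infty S$ and the uniqueness statement are consistent with the paper.
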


\begin{proof}
[Proofs of Theorems \ref{hyp} and \ref{par}]In the case of Theorem \ref{hyp},
let $\gamma$ be the geodesic of $\mathbb{H}^{n+1}$ such that $\partial
_{\infty}\gamma=\left\{  p_{1},p_{2}\right\}  ,$ oriented from $p_{1}$ to
$p_{2}.$ Let $X$ be the Killing field which orbits are equidistant hypercycles
of $\gamma$ and assume that $X|_{\gamma}$ induces the same orientation of
$\gamma.$ Let $\mathbb{H}^{n}$ be a totally geodesic hypersurface orthogonal
to $\gamma$ and let $U$ be the connected component of $\mathbb{H}%
^{n+1}\backslash\mathbb{H}^{n}$ such that $p_{2}\in\partial_{\infty}U.$ We
also assume that $\Gamma\subset\partial_{\infty}U.$ Since $\Gamma$ is compact
and $p_{1},p_{2}\notin\Gamma,$ we claim that there is a hypersphere $F\subset
U$ transversal to $X$, with CMC $\left\vert H\right\vert $ when oriented with
a normal vector field $\eta$ such that $\left\langle \eta,X\right\rangle
\leq0,$\ and such that $p_{2}$ and $\Gamma$ are in distinct connected
components of $\partial_{\infty}\mathbb{H}^{n+1}\backslash\partial_{\infty}F.$
This is easily proved by considering the half-space model of $\mathbb{H}%
^{n+1},$ taking $p_{1}=0,$ $p_{2}=\left\{  x_{n+1}=\infty\right\}  $ and
$\mathbb{H}^{n}$ as the half-sphere centered at $0$ with Euclidean radius $1.$
Then, since $\Gamma\subset\partial_{\infty}\mathbb{H}$ is compact and
$p_{2}\notin\Gamma$ it follows that $\Gamma$ is compact in $\mathbb{R}%
^{n}:=\left\{  x_{n+1}=0\right\}  .$ Now, take a hypersphere $E$ of
$\mathbb{R}^{n}$ centered at the origin $0$, containing $\Gamma$ in its
interior, and choose $F$ as the hypersphere of $\mathbb{H}^{n+1}$ with CMC
$\left\vert H\right\vert $ with respect to the unit normal vector field
pointing to $0$ and having $E$ as asymptotic boundary.

The orbits of $X$ on $\partial_{\infty}\mathbb{H}^{n+1}$ are arc of circles
(up to a conformal map) from $p_{1}$ to $p_{2}.$ Therefore, from the
hypothesis it follows that $\Gamma$ is the $X-$Killing graph of a function
$\phi_{X}\in C^{0}\left(  \partial_{\infty}\mathbb{H}^{n}\right)  .$ Also, $F$
is the graph of a positive solution $w_{X}\in C^{\infty}\left(  \mathbb{H}%
^{n}\right)  \cap C^{0}\left(  \overline{\mathbb{H}^{n}}\right)  $ of
$Q_{H}=0$ in $\mathbb{H}^{n}.$

In case of Theorem \ref{par} let $\mathbb{H}^{n}$ be the a totally geodesic
hypersurface such that $\partial_{\infty}\mathbb{H}^{n}=E_{1}.$ Let $Y$ be a
Killing field orthogonal to $\mathbb{H}^{n}$ which orbits are horocycles
orthogonal do $\mathbb{H}^{n}$ and such that $Y\left(  p\right)  =0.$ Let $F$
be a hypersphere of $\mathbb{H}^{n+1}$ such that $\partial_{\infty}F=E_{2}$
and that has constant mean curvature $\left\vert H\right\vert $ with respect
to a unit normal vector field $\eta$ such that $\left\langle \eta
,Y\right\rangle \leq0.$ The orbits of $Y$ on $\partial_{\infty}\mathbb{H}%
^{n+1}\backslash\left\{  p\right\}  $ are of the form $C\backslash\left\{
p\right\}  $ where $C$ is a circle passing through $p$ and orthogonal to
$E_{1}$ and $E_{2}$\ at $p.$ Therefore, from the hypothesis it follows that
$\Gamma$ is the $Y-$Killing graph of a function $\phi_{Y}\in C^{0}\left(
\partial_{\infty}\mathbb{H}^{n}\backslash\left\{  p\right\}  \right)  .$ Also,
$F$ is the graph of a positive solution $w_{Y}\in C^{\infty}\left(
\mathbb{H}^{n}\right)  \cap C^{0}\left(  \overline{\mathbb{H}^{n}}%
\backslash\left\{  p\right\}  \right)  $ of $Q_{H}=0$ in $\mathbb{H}^{n}.$

In the sequel, let $Z$ be either $X$ or $Y$, $\phi$ either $\phi_{X}$ or
$\phi_{Y}$ and $w$ either $w_{X}$ or $w_{Y}.$ Denote by $\Psi$ the flow of $Z$
(note that since $Y(p)=0$, $\Psi(p,t)=p$ for all $t\in\mathbb{R}$ in the case
that $Z=Y$).

Obviously $\sigma=0$ is a subsolution. Then $\sigma|_{\partial_{\infty
}\mathbb{H}^{n}}\leq\phi\leq w|_{\partial_{\infty}\mathbb{H}^{n}}$ and it
follows that the function $u=u_{\phi}$ defined in Theorem \ref{perron} is in
$C^{\infty}\left(  \mathbb{H}^{n}\right)  $ and satisfies $Q_{H}\left(
u\right)  =0.$ We prove now that $u$ extends continuously to $\partial
_{\infty}\mathbb{H}^{n}$ and that $u|_{\partial_{\infty}\mathbb{H}^{n}}=\phi$
when $Z=X,$ and that $u$ extends continuously to $\partial_{\infty}%
\mathbb{H}^{n}\backslash\left\{  p\right\}  $ and that $u|_{\partial_{\infty
}\mathbb{H}^{n}\backslash\left\{  p\right\}  }=\phi$ when $Z=Y.$

Consider a point $x_{0}\in\partial_{\infty}\mathbb{H}^{n}$ and assume that
$p\neq x_{0}$ if $Z=Y.$ We first define, inductively, a sequence $\sigma
_{k}\in C_{b}^{0}\left(  \overline{\mathbb{H}}^{n}\right)  $ of subsolutions,
which graph we denote by $g_{k},$ satisfying $0\leq\sigma_{k}|_{\partial
_{\infty}\mathbb{H}^{n}}\leq\phi$ (that is, $\sigma_{k}\in S_{\phi}$ in the
notation of Theorem \ref{perron}) and such that $\phi\left(  x_{0}\right)
=\lim_{k}\sigma_{k}(x_{0}).$ Choose $\sigma_{0}=\sigma,$ the null function.
Assuming that $\sigma_{k}$ is defined, $k\geq0,$ let $E_{k}$ be the biggest
hypersphere of $\partial_{\infty}\mathbb{H}^{n+1}$ centered at $\Psi
(\sigma_{k}(x_{0}),x_{0})$ such that $E_{k}$ is contained in the closure of
the connected component of $\partial_{\infty}\mathbb{H}^{n+1}\backslash\Gamma$
that contains $\partial_{\infty}\mathbb{H}^{n}.$ Let $T_{k}$ be the totally
geodesic hypersphere of $\mathbb{H}^{n+1}$ such that $\partial_{\infty}%
T_{k}=E_{k}.$ $T_{k}$ is the union of two subdomains $T_{k}^{\pm}$ of $T_{k}$
satisfying $T_{k}^{+}\cap T_{k}^{-}=\partial T_{k}^{+}\cap\partial T_{k}^{-},$
both of them being graphs over domains $D_{k}^{\pm}$ of $\mathbb{H}^{n}$ and
with one, say $T_{k}^{+},$ such that $\partial_{\infty}T_{k}^{+}\cap\Gamma
\neq\varnothing.$ Assuming that $T_{k}^{+}$ is the graph of a function
$\alpha_{k}$ defined in $D_{k}^{+}\subset\mathbb{H}^{n},$ we define
$\sigma_{k+1}$, the semi-continuous minimal lift of $\sigma_{k}$ by
$\alpha_{k}$, as%
\begin{equation}
\sigma_{k+1}\left(  x\right)  =\left\{
\begin{array}
[c]{ll}%
\sigma_{k}(x) & \text{if }x\notin D_{k}^{+}\\
\alpha_{k}(x) & \text{if }x\in D_{k}^{+}.
\end{array}
\right.  \label{sub}%
\end{equation}

We now define in a similar way a sequence of supersolutions $w_{k}$ with graph
denoted by $G_{k}$ such that $w\geq w_{k}|_{\partial_{\infty}\mathbb{H}^{n}%
}\geq\phi$ and $\phi\left(  x_{0}\right)  =\lim_{k}w_{k}(x_{0})$ as follows$.$
We set $w_{0}=w.$ Assuming that $w_{k}$ is defined, $k\geq0,$ we define the
$H$ descent $w_{k+1}$ of $w_{k}$ as follows. Let $E_{k}$ be the biggest
hypersphere of $\partial_{\infty}\mathbb{H}^{n+1}$ centered at $\Psi
(w_{k}(x_{0}),x_{0})$ such that $E_{k}$ is contained in the closure of the
connected component of $\partial_{\infty}\mathbb{H}^{n+1}\backslash\Gamma$
that does not contain $\partial_{\infty}\mathbb{H}^{n}.$ Let $F_{k}$ be the
CMC $H$ hypersphere of $\mathbb{H}^{n+1}$ such that $\partial_{\infty}%
F_{k}=E_{k}$ and $\Gamma$ is contained in the asymptotic boundary of the
\emph{convex} connected component of $\mathbb{H}^{n+1}\backslash F_{k}$ (the
convex connected component of $\mathbb{H}^{n+1}\backslash F_{k}$ is the one
which the mean curvature vector of $F_{k}$ is pointing to. If $H=0$ then
$F_{k}$ is the only totally geodesic hypersurface such that $\partial_{\infty
}F_{k}=E_{k}$)$.$ $F_{k}$ is the union of two subdomains which are graphs over
$\widetilde{D}_{k}^{\pm}\subset\mathbb{H}^{n}$, one of them, $F_{k}^{+},$ such
that $\partial_{\infty}F_{k}^{+}\cap\Gamma\neq\varnothing.$ Assuming that
$F_{k}^{+}$ is the graph of a function $\beta_{k}$ defined in $\widetilde
{D}_{k}^{+}\subset\mathbb{H}^{n},$ we define $w_{k+1}$ as (\ref{sub}) by
replacing $\sigma_{k}$ by $w_{k},$ $D_{k}^{+}$ by $\widetilde{D}_{k}^{+}$ and
$\alpha_{k}$ by $\beta_{k}.$

We now prove, using induction on $k,$ that $u\leq w_{k}$ for all $k.$ It is
enough to prove that given $\delta\in S_{\phi}$ (using the notation of Theorem
\ref{perron}) we have $\delta<w_{k}.$ The case $k=0$ follows from Lemma
\ref{comp} since $w$ is a solution. Assume that $\delta<w_{k}$ for some
$k\geq0.$ If $\delta\geq w_{k+1}$ choose $t>0,$ $t\simeq0,$ such that
$\delta_{t}:=\Psi(t,\delta)$ satisfies $\delta_{t}<w_{k}$ and $\delta
_{t}>w_{k+1}.$ Then we have $U\subset\widetilde{D}_{k}^{+}$ where
\[
U:=\left\{  x\in\mathbb{H}^{n}\text{
$\vert$
}\delta_{t}(x)>w_{k+1}(x)\right\}  .
\]
It follows that $\delta_{t}>\beta_{k}$ on $U$ and $\delta_{t}|_{\partial
U}=\beta_{k}|_{\partial U},$ a contradiction, since $\delta_{t}\in C_{b}%
^{0}\left(  \overline{\mathbb{H}}^{n}\right)  $ is a subsolution and
$\beta_{k}\in C^{\infty}\left(  U\right)  \cap C^{0}\left(  \overline
{U}\right)  $ a solution. Then $\sigma_{k}\leq u\leq w_{k}$ so that, given a
sequence $x_{m}\in\mathbb{H}^{n}$ converging to $x_{0}$ we have
\[
\sigma_{k}\left(  x_{0}\right)  =\lim_{m}\sigma_{k}\left(  x_{m}\right)
\leq\lim_{m}u\left(  x_{m}\right)  \leq\lim_{m}w_{k}\left(  x_{m}\right)
=w_{k}(x_{0}).
\]
Letting $k\rightarrow\infty$ we conclude that $\lim_{m}u\left(  x_{m}\right)
=u\left(  x_{0}\right)  =\phi\left(  x_{0}\right)  .$ This proves that $u$
extends continuously to $\partial_{\infty}\mathbb{H}^{n}$ and $u|_{\partial
_{\infty}\mathbb{H}^{n}}=\phi$ in the hyperbolic case that is, $Z=X,$ and that
$u$ extends continuously to $\partial_{\infty}\mathbb{H}^{n}\backslash\left\{
p\right\}  $ in the parabolic case ($Z=Y)$. In the parabolic case, since the
Killing graph $S$ of $u$ is contained between $F$ and $\mathbb{H}^{n},$ and
$p\in\overline{F}\cap\overline{\mathbb{H}}^{n},$ it follows that $p\in
\partial_{\infty}S.$ This proves the existence part. The uniqueness is an
immediate consequence of the maximum principle.
\end{proof}

\bigskip

\vspace*{-2ex}

{\renewcommand{\baselinestretch}{1} \hspace*{-20ex}\begin{tabbing}
\indent \= Jaime Ripoll \\
\> Departamento de Matematica \\
\> Univ. Federal do Rio Grande do Sul \\
\> Av. Bento Gon\c calves 9500\\
\> 91501-970 -- Porto Alegre -- RS -- Brazil\\
\> jaime.ripoll@ufrgs.br\\
\end{tabbing}}
\end{document}